\documentclass[10pt]{article}
\usepackage{amsmath,amsxtra,latexsym,amsthm,amssymb,amscd,amsfonts,verbatim,cprotect}
\usepackage{orcidlink}
\usepackage{graphicx} 
\usepackage[mathscr]{eucal}
\usepackage{amsfonts}
\usepackage{graphics,graphicx,array}
\usepackage{subcaption}
\usepackage{color}
\usepackage{listings}
\usepackage{hyperref}
\usepackage{algorithm}
\usepackage{algorithmic}
\usepackage{listings, color}
\usepackage{tikz}
\definecolor{codegray}{rgb}{0.5,0.5,0.5}
\definecolor{codeblue}{rgb}{0.1,0.1,0.7}
\definecolor{codegreen}{rgb}{0,0.6,0}

\makeatletter
\let\@fnsymbol\@arabic
\makeatother

\newtheorem{ex}{\bf Example}[section]
\newtheorem{remark}{\bf Remark}[section]

\newtheorem{theorem}{\bf Theorem}[section]

\lstset{
  basicstyle=\ttfamily\small,
  keywordstyle=\color{codeblue}\bfseries,
  commentstyle=\color{codegreen},
  stringstyle=\color{codegray},
  numbers=left,
  numberstyle=\tiny\color{codegray},
  frame=single,
  breaklines=true,
  language=Python,
  morekeywords={LOAD, CREATE, DEFINE, FIND, SORT, DRAW_LINE, REMOVE_NODES, CALCULATE_RADIUS,
                DISTRIBUTE_ANGLES, ASSIGN_COLOR, COMBINE_NODES, REASSIGN_EDGES, SCALE_BY_GROUP_SIZE},
}

\title{Similarity-based fuzzy clustering scientific articles: \\potentials and challenges from mathematical and computational perspectives}

\author{Vu Thi Huong\,\orcidlink{0009-0007-4869-0505}\footnote{
		Digital Data and Information for Society, Science, and Culture,
		Zuse Institute Berlin, 14195 Berlin, Germany; and
		Institute of Mathematics, Vietnam Academy of Science and Technology, 10072 Hanoi, Vietnam. Email: huong.vu@zib.de}, \, Ida Litzel\,\orcidlink{ 0009-0000-0166-7809}\footnote{Digital Data and Information for Society, Science, and Culture,
		Zuse Institute Berlin, Germany. Email: Litzel@zib.de}, \, and  Thorsten Koch\,\orcidlink{0000-0002-1967-0077}\footnote{Software and Algorithms for Discrete Optimization, Technische Universit\"at Berlin, Germany; and Applied Algorithmic Intelligence Methods, Zuse Institute Berlin, Germany. Email: koch@zib.de}}
\begin{document}
\maketitle
\begin{center}
{Dedicated to Professor Yurii Nesterov on the occasion of his 70th birthday}
\end{center}
\begin{quote}
\noindent {\bf Abstract.} Fuzzy clustering, which allows an article to belong to multiple clusters with soft membership degrees, plays a vital role in analyzing publication data. This problem can be formulated as a constrained optimization model, where the goal is to minimize the discrepancy between the similarity observed from data and the similarity derived from a predicted distribution. While this approach benefits from leveraging state-of-the-art optimization algorithms, tailoring them to work with real, massive databases like OpenAlex or Web of Science -- containing about 70 million articles and a billion citations -- poses significant challenges. We analyze potentials and challenges of the approach from both mathematical and computational perspectives. Among other things, second-order optimality conditions are established, providing new theoretical insights, and practical solution methods are proposed by exploiting the problem’s structure. Specifically, we accelerate the gradient projection method using GPU-based parallel computing to efficiently handle large-scale data.

\noindent{\bf Keywords:} fuzzy clustering, large-scale publication data, non-convex optimization, second-order optimality, gradient projection methods, Nesterov acceleration, GPU-based parallel computing, bibliometrics

\noindent{\bf 2020 Mathematics Subject Classification:} 90C26, 90C30, 90C90,  62H30, 68W10, 68T05, 68T09

\end{quote}

\section{Introduction}\label{Sect_Introduction} 
The exponential growth of scientific literature has intensified the need for effective computerized methods to map and understand the structure of research fields. Bibliometric analysis, which quantitatively examines scholarly communication, plays a crucial role in this endeavor; see, e.g., \cite{Fortunato_etal_18}. A key task within bibliometrics is clustering scientific publications into coherent groups that reflect topical, methodological, or intellectual similarities. This facilitates applications such as literature recommendation \cite{Beel_etal_16} or identification of emerging research areas \cite{Klavans_Boyack_16}.

Popular clustering methods, including the Louvain \cite{Blondel_etal_08} and Leiden \cite{Traag_etal_19} algorithms, typically assign each publication to a single cluster. However, in reality, areas often overlap, as seen, for example, in the intersection of theoretical computer science and mathematics, and interdisciplinary research can easily span multiple topics. This motivates the use of \textit{fuzzy clustering}, which allows for overlapping cluster memberships. To the best of our knowledge, we found no prior studies that have specifically applied fuzzy clustering methods to the clustering of scientific publications -- a gap that this study aims to address.

Scientific documents exhibit rich structures through both citation-based linkages and textual content, offering multiple complementary perspectives for assessing similarity and forming clusters; see \cite{Xie_Waltman_17, Waltman_etal_20}. Citation-based similarities, such as direct citations, co-citations, and bibliographic coupling, capture intellectual relationships, while textual features reflect semantic closeness. These diverse features provide a foundation for clustering but also raise the question of which fuzzy clustering techniques are best suited to integrate such heterogeneous data.

A commonly used approach in fuzzy clustering is the Fuzzy C-Means (FCM) algorithm \cite{Bezdek_81}, which has demonstrated success in various domains, including medical image segmentation \cite{Pham_etal_00} and pattern recognition \cite{Pal_Bezdek_95}. However, FCM requires the vectorization of documents -- transforming textual or citation data into numerical vectors -- prior to clustering. This preprocessing step can be computationally intensive when dealing with large-scale datasets and also tightly couples representation and clustering.

To overcome these limitations, we adopt the similarity-based fuzzy clustering method proposed by Nepusz et al. \cite{Nepusz_etal_08}, which models clustering as a constrained optimization problem minimizing the discrepancy between observed and predicted similarities. Unlike FCM, this approach can operate directly on pairwise similarity matrices derived from citation data, eliminating the need for vectorization. It also allows for the integration of additional features, such as semantic similarities from text, into the similarity matrix without altering the core clustering mechanism.

Nevertheless, adapting the method to large publication databases, such as OpenAlex\footnote{\href{https://openalex.org}{https://openalex.org}} and Web of Science\footnote{\href{https://www.webofscience.com}{https://www.webofscience.com}}, which comprise millions of articles and billions of citations, poses substantial challenges. Efficient and scalable algorithms are essential for handling data of this magnitude. In this paper, we address similarity-based fuzzy clustering from both mathematical and computational perspectives. We introduce parallel implementations of the gradient projection algorithm (parallel GPA) and the fast iterative shrinkage-thresholding algorithm (parallel FISTA) for large-scale processing. Both methods yield significant reductions in the clustering objective, with parallel FISTA exhibiting strong heuristic acceleration on graphs with several million nodes. On the theoretical side, we prove the convergence of parallel GPA to critical points and, for the first time, establish second-order optimality conditions, offering insights regarding the quality of the solutions.

A central contribution of our work is a scalable parallelization strategy that leverages the mathematical structure of the fuzzy clustering problem. By decoupling computations across columns and sharing compact intermediate results, our algorithms avoid costly operations on large matrices. Our CUDA-based implementation achieves high throughput, supporting clustering on graphs with millions of nodes.



The remainder of the paper is organized as follows. Section 2 introduces the fuzzy clustering model and the gradient projection algorithm, highlighting the computational challenges. Section 3 details our parallelization techniques, emphasizing efficient gradient computations that enable scalability to millions of articles. Section 4 presents theoretical refinements via second-order optimality conditions. Section 5 reports on experiments across synthetic and real-world datasets, demonstrating the scalability and effectiveness of parallel GPA and parallel FISTA. We summarize our contribution and outline directions for future work in Section~6.

\section{Fuzzy clustering with gradient projection}
 Given a set of $N$ scientific articles, we are interested in the problem of partitioning the set into $C$ clusters such that articles within the same cluster are more similar to each other than to articles in different clusters. To reflect the fact that an article can span multiple research domains, we allow each article to belong to multiple clusters with varying degrees of membership.

Such a partition can be described by a matrix $X = (x_{ki}) \in [0,1]^{C \times N}$
with each $x_{ki}$ representing the membership degree of the article $i$ w.r.t. the cluster $k$. 
When $x_{ki} = 1$, then article $i$ sole belongs to cluster $k$; otherwise 
it participates in every cluster $k$ where $x_{ki} > 0$. 
For each article, we assume that the total membership degrees over $C$ clusters is $1$. Thus, a valid partition is a matrix from the set
\begin{equation}\label{constraint}
\Omega :=  \left\{X = (x_{ki}) \in [0,1]^{C \times N} \ \big | \sum_{k = 1}^C  x_{ki}  = 1, \ \forall i \in \{1, \dots, N\}\right\}.
\end{equation}

A partition can reveal a similarity measure among articles. Consider two articles $i, j \in \{1, \dots, N\}$  from a partition matrix $X \in \mathbb R^{C \times N}$. For each  $k \in \{1, \dots, C\}$, $x_{ki}$ and $x_{kj}$ are membership degrees of the articles $i$ and $j$, respectively. How much property of the cluster $k$ the two articles share can be quantified by $x_{ki}x_{kj}$, their similarity w.r.t. cluster $k$. Summing up over all $C$ clusters, we get a similarity between $i$ and $j$. This means that each partition $X \in \mathbb R^{C \times N}$ defines a similarity matrix $Y = (y_{ij}) \in  [0,1]^{N \times N}=X^TX$ 
with element $y_{ij}$ representing the similarity between two articles $i, j$. 

As our aim is to group articles based on their similarity, we suppose that an observed similarity among articles, $S = (s_{ij}) \in \mathbb R^{N \times N}$
is given by the data, for example, through citation- or text-based measures. 
A good partition should group similar articles in the same cluster. To measure how good a partition $X$ is, we quantify how well the similarity $Y$ derived from partition $X$ approximates the observed similarity $S$ by
\begin{align*}
    f(X) = \displaystyle\sum_{i = 1}^N\sum_{j = 1}^N (s_{ij} - y_{ij})^2=\sum_{i = 1}^N\sum_{j = 1}^N \left(s_{ij} - \sum_{k = 1}^C x_{ki}x_{kj}\right)^2 = \|S - X^TX\|_F^2,
\end{align*} 
where $\| \cdot \|_F$ stands for the matrix Frobenius norm. The problem of fuzzy clustering the set of $N$ scientific articles into $C$ clusters based on an observed similarity  $S = (s_{ij}) \in \mathbb R^{N \times N}$ now can be formulated, as in~\cite{Nepusz_etal_08}, as follows
\begin{align}\label{P}
    \min_{X \in \Omega} f(X). \tag{$\mathcal P$}
\end{align}

\noindent{\bf The gradient projection algorithm (GPA).} To design a solution method for~\eqref{P}, we first observe that the constraint set $\Omega$ is nonempty and compact while the objective function $f$ is continuous and bounded from below (by $0$) on $\mathbb R^{C \times N}$. Thus, the problem~\eqref{P} has a \textit{global optimal solution}, i.e., the (global) optimal solution set $$\mbox{Sol}\eqref{P} := \{\bar X \in \Omega \ | \ f(\bar X) \leq f(X), \   \forall X \in \Omega\}$$ is nonempty. While it is desirable to find a global optimum solution, this can be very costly, as general nonconvex optimization is $NP$-hard. Thus, it is more practical to find a \textit{local optimal solution} of \eqref{P}, i.e., a candidate $\bar X \in \Omega$ such that $f(\bar X) \leq f(X)$ for any $X \in \Omega$ and ``close enough" to $\bar X$. Since the constraint set is convex and the objective function is differentiable on the whole space, a local solution $\bar X$ has to satisfy (see, e.g., \cite[Theorem~3.24]{Rusz_06}) the  \textit{first-order necessary optimality condition}
\begin{equation}\label{opt_con}
\bar X \in \Omega \quad \mbox{and} \quad \langle \nabla f(\bar X), X - \bar X \rangle \geq 0, \quad \forall X \in \Omega,
\end{equation}
which shows a special interaction between the constraint set $\Omega$ and the gradient $\nabla f(\bar X)$ of the objective function  at $\bar X$. 

\medskip
One calls a matrix $\bar X$ satisfying~\eqref{opt_con} a \textit{critical point} of~\eqref{P}. Such a matrix can be found by the \textit{gradient projection algorithm} (GPA); see, e.g.,~\cite[Theorem~6.1]{Rusz_06}. The algorithm starts with initializing a membership matrix $X^0 \in \Omega$, then iterates over 
\begin{equation*}
	X^{n+1} := P_{\Omega}(X^n - \tau_n \nabla f(X^n)), \quad \forall n \geq 1
\end{equation*}
until a stopping condition is met. Each step of the algorithm consists of two ingredients: moving the current state $X^n$ toward the direction of the negative gradient $\nabla f(X^n)$ with a \textit{step size} $\tau_n >0$, and then projecting onto the constraint set $P_{\Omega}$ to ensure the requirements in~\eqref{constraint} are not broken. 

\medskip
\noindent{\bf Projection onto $\Omega$.}
 The set $\Omega$ is nonempty, closed, and convex in $\mathbb R^{C \times N}$.
 Thus, for any matrix $X \in \mathbb R^{C \times N}$, the \textit{projection} $P_{\Omega}(X)$ of $X$ onto the set $\Omega$ exists uniquely as
 \begin{equation*}
P_{\Omega}(X):= \mbox{argmin }\{\|Y - X\|_F^2 \ | \ Y \in \Omega\},
 \end{equation*}
i.e., as the minimizer of a quadratic function over the set $\Omega$. Solving this constrained optimization problem can be avoided by exploiting the structure of $\Omega$.

\medskip 
 Observe that the constraint $X \in \Omega$ means that
    each element $x_{ki}$ of the matrix $X$ must lie in the interval $[0, 1]$, and
    for each column $\mathbf{x}_i = (x_{1i}, x_{2i}, \dots, x_{Ci})^\top$, its elements must sum to 1.
Therefore, $\Omega$ can be rewritten as 
\begin{equation*}
	\Omega = \underbrace{\Delta^C \times \Delta^C \times \dots \times \Delta^C}_{N \ \mbox{times}},  
\end{equation*}
where 
\[
\Delta^C := \left\{ \mathbf{y} = (y_1, y_2, \dots, y_C)^\top \in \mathbb{R}^C \ \middle| \ y_k \geq 0, \ \forall k, \ \text{and} \ \sum_{k=1}^C y_k = 1 \right\}
\] is the \textit{unit simplex} in $\mathbb R^C$. Consequently, for any $X \in \mathbb R^{C \times N}$,
 the projection of $X$ onto $\Omega$ is performed independently column-wise
\begin{equation*}
	P_{\Omega}(X) =  P_{\Delta^C}(\mathbf x_1) \times P_{\Delta^C}(\mathbf x_2) \times \dots \times P_{\Delta^C}(\mathbf x_N);
\end{equation*}
see, e.g., \cite[Proposition~29.3]{Bauschke_Bauschke_10}.

\medskip

 The unit simplex $\Delta^C$ is a nonempty, closed, and convex subset of the Euclidean space $\mathbb{R}^C$. For a given vector \(\mathbf{x} \in \mathbb R^C\), the projection $P_{\Delta^C}(\mathbf x)$ of $\mathbf x$ onto the unit simplex is the unique solution of the problem
\[
\min \| \mathbf{y} - \mathbf{x} \|_2^2 \quad \text{subject to} \quad \mathbf{y} \in \Delta^C.
\]
Note that if $\mathbf x \in \Delta^C$, then  $P_{\Delta^C}(\mathbf x) = \mathbf x$ which means that the projection of $\mathbf x$ onto the unit simplex is itself. In general, by using Lagrange multipliers to deal with the constraints of the above minimization problem, one can find the projection by, for example, the Duchi algorithm~(Algorithm \ref{Duchi_alg}; see~\cite{Duchi_etal_08}).

\medskip
To find the projection of a given vector $\mathbf x$ onto the unit simplex using the Duchi algorithm, we first sort the components of the vector \( \mathbf{x} \) in descending order. This allows the algorithm to operate on the largest components first, which simplifies the problem. Then, we find a threshold value \( \tau \) that ensures the resulting vector sums up to $1$ while maintaining nonnegative components. Finally, after determining the threshold, we subtract it from the components of \(\mathbf{x}\) and set any values that become negative to zero.

\begin{algorithm}
\caption{(Duchi algorithm -- projection onto the unit simplex -- see~\cite{Duchi_etal_08})}
\begin{algorithmic}[1] \label{Duchi_alg}
\STATE \textbf{Input:} A vector \( \mathbf{x} = [x_1, x_2, \dots, x_C] \)
\STATE Sort the components of \( \mathbf{x} \) in descending order: $x_{\text{sorted}} = [x_{(1)}, x_{(2)}, \dots, x_{(C)}]$;\\
\STATE Compute the cumulative sum of the sorted components: \( S_k = \sum_{i=1}^{k} x_{(i)} \) for \( k = 1, 2, \dots, C \);
\STATE Find the largest \( k \) such that:
$x_{(k)} - \frac{S_k - 1}{k} \geq 0$;
\STATE Set the threshold \( \tau = \frac{S_k - 1}{k} \);
\STATE For each component \( i \), compute the projected value: $
y_i = \max(x_i - \tau, 0)
$.
\STATE \textbf{Output:} The projected vector \( \mathbf{y} = [y_1, y_2, \dots, y_C] \)
\end{algorithmic}
\end{algorithm}

\medskip
\noindent{\bf Gradient update.}

\medskip
For each $X \in [0,1]^{C\times N}$, the gradient is a matrix of size $C \times N$ given by

\begin{equation}\label{gradient_matrix}
\nabla f(X) = -4 X (S - X^TX), \quad \forall X \in \mathbb R^{C \times N}.
\end{equation} 
Updating the gradient using~\eqref{gradient_matrix} involves:
    \begin{itemize}
        \item[-] matrix multiplication \(X^T X\) (size \(N \times N\)) with  \(\mathcal{O}(CN^2)\) operations;
        \item[-] subtraction \(S - X^T X\) (size \(N \times N\)) with with  \(\mathcal{O}(N^2)\) operations;
        \item[-] matrix multiplication \(X(S - X^T X)\) (size \(C \times N\)) with  \(\mathcal{O}(CN^2)\) operations;
        \item[-] scalar multiplication by \(-4\).
    \end{itemize}
Therefore, each gradient update costs \(\mathcal{O}(CN^2)\). Combining this with the cost $\mathcal{O}(N C \log C)$ for computing the projection ($N$ columns, each column involves $C \log C$ operations for the sorting algorithm) we can estimate the \textit{per-iteration cost for the gradient projection method} as
\begin{equation*}
\mathcal{O}(CN^2 + N C \log C). 
\end{equation*}
This is extremely expensive for the target data with $N$ from $50$ to $250$ million articles, leading us to the question of \textit{how to implement the gradient projection method efficiently}.

\section{Parallelization}
The fact that the projection onto $\Omega$ is done column-wise suggests that we access columns of the gradient matrix $\nabla f(X)$ at each step. Updating $\nabla f(X)$ using~\eqref{gradient_matrix} and then extracting its $N$ columns to perform $N$ projections onto the unit simplex is prohibitive for large $N$. 

\medskip
Let $X \in \mathbb R^{C \times N}$ and let $\mathbf{x}_i = (x_{1i}, x_{2i}, \dots, x_{Ci})^\top$ stand for its $i$-th column. Then 
gradient of $f$ at $X$ can be written element-wise as
\begin{equation*}
\dfrac{\partial f}{\partial x_{ki}} =  \sum_{j = 1}^N -4 (s_{ij} - \mathbf x_i^\top\mathbf x_j)x_{kj}, \quad \forall k = 1, 2, \dots, C, \ i = 1, 2, \dots, N.
\end{equation*}
This means that the element at row $k$ and column $i$ of $\nabla f(X)$ is
\begin{equation*}
    -4 \times [\mbox{row } k \mbox{ of } X] \times [\mbox{column } i  \mbox{ of } S - \mbox{ column } i \mbox{ of } X^\top X].
\end{equation*}
Thus, column $i$ of $\nabla f(X)$ is
\begin{align*}
    -4 X \times [\mbox{column } i  \mbox{ of } S - \mbox{ column } i \mbox{ of } X^\top X].
\end{align*}
Since column $i$ of $X^\top X$ is $X^\top \mathbf x_i$, we derive that 
\begin{align}\label{gradient_column}
 \mbox{column } i \mbox{ of } \nabla f(X) = -4 (X \mathbf s_i - X X^\top \mathbf x_i),
\end{align}
where $\mathbf s_i$ stands for the column $i$ of the matrix $S$. 

\begin{algorithm}
\caption{(Parallel GPA for fuzzy clustering)}
\begin{algorithmic}[1] \label{GPA_para}
\STATE \textbf{Input:} similarity matrix $S \in [0,1]^{N \times N}$,  cluster-num $C >0$, step size $\tau_n>0$\\
\STATE \textbf{Initialize:} membership matrix $X^0 = [\mathbf x_1^0, \mathbf x_2^0, \dots, \mathbf x_N^0]\in \Omega$;\\ \qquad \qquad \quad ($\mathbf x_i^0$ is the $i$-th column of $X^0$)
\STATE for each iteration $n := 0, \dots,$ max-iter \& a stopping criterion is not met:
\STATE  \quad $\mbox{share}^{n} := X^n (X^n)^\top$;
\STATE \quad for $i := 1, \dots, N$ (in parallel) do:
\STATE \quad \quad \quad $\mbox{\bf grad}_i^{n} := -4 (X^n \mathbf s_i - \mbox{share}^{n} \mathbf x_i^n)$; \quad \qquad \qquad \qquad \, [gradient update]\\
\STATE \quad \quad  \quad $\mathbf x_i^{n+1} := P_{\Delta^C} (\mathbf x_i^n - \mbox{step size} \times \mbox{\bf grad}_i^{n})$; \quad \qquad \qquad  [Duchi algorithm]
\STATE \textbf{Output:} membership matrix $X^{n+1} = [\mathbf x_1^{n+1}, \mathbf x_2^{n+1}, \dots, \mathbf x_N^{n+1}]$
\end{algorithmic}
\end{algorithm}

\medskip
 Note that $S$ is typically a sparse matrix where each column $\mathbf s_i$ of $S$ contains only a few nonzeros, so the computation for $X \mathbf s_i$ can be done fast. The cost for $X X^\top$ is $\mathcal{O}(N C^2)$ operations, which is linear w.r.t. to the data size $N$, as $C$ is much smaller than $N$. The \textit{overall cost for updating each column of the gradient is $\mathcal{O}(N)$}, which is more manageable than $\mathcal{O}(N^2)$ of updating the whole gradient at once. More importantly, formula~\eqref{gradient_column} tells us that the gradient update can be implemented column-wise in parallel, matching with the parallelization of the projection. Note also that the computation for $X X^\top$ needs to be performed once and is shared among the column updates. 
 Therefore, we propose Algorithm~\ref{GPA_para}, a parallel gradient projection algorithm for solving~\eqref{P}.

 \medskip
The following theorem guarantees the non-increasing property of the objective function and the convergence of the iterative matrices generated by Algorithm~\ref{GPA_para} when the step size is chosen to be sufficiently small.

\begin{theorem}\label{gpa_conv_thm}
 There exists $\bar \tau > 0$ such that, for any $\tau \in (0, \bar \tau)$, the sequence $\{X^n\}$ generated by Algorithm~\ref{GPA_para} from any starting point $X^0 \in \Omega$ with constant step size $\{\tau_n\}\equiv \tau$ has the following properties:
\begin{itemize}
    \item[{\rm (i)}] The sequence $\{f(X^n)\}$ is non-increasing. In addition, it holds that $$f(X^{n+1}) < f(X^n), \quad \forall n \geq 0,$$ unless $X^n$ is a critical point of ($\mathcal P$).
    \item[{\rm (ii)}] The sequence $\{\|X^{n+1} - X^n\|_F\}$ converges to $0$; and thus, any accumulation point of $\{X^n\}$ is a critical point of ($\mathcal P$).
\end{itemize}
\end{theorem}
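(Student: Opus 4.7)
The plan is to view Algorithm~\ref{GPA_para} as the classical gradient projection scheme $X^{n+1} = P_\Omega(X^n - \tau \nabla f(X^n))$ applied to the smooth quartic objective $f(X) = \|S - X^\top X\|_F^2$ on the nonempty, compact, convex set $\Omega$, and to invoke the standard descent argument. Since $f$ is a polynomial of degree four in the entries of $X$ and $\Omega$ is bounded, $\nabla f$ is Lipschitz on any convex bounded neighborhood of $\Omega$: there exists $L>0$ with
\[
\|\nabla f(X) - \nabla f(Y)\|_F \leq L\,\|X - Y\|_F, \qquad \forall X, Y \in \Omega.
\]
I will set $\bar\tau := 2/L$ and fix an arbitrary $\tau \in (0, \bar\tau)$.

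For (i), I would first record the descent lemma: for $X, Y \in \Omega$,
\[
f(Y) \leq f(X) + \langle \nabla f(X), Y - X \rangle + \tfrac{L}{2}\|Y - X\|_F^2.
\]
The variational characterization of $P_\Omega$ applied with the test point $X^n \in \Omega$ gives
\[
\langle X^n - \tau \nabla f(X^n) - X^{n+1},\, X^n - X^{n+1} \rangle \leq 0,
\]
so $\langle \nabla f(X^n), X^{n+1} - X^n \rangle \leq -\tfrac{1}{\tau}\|X^{n+1} - X^n\|_F^2$. Substituting into the descent lemma with $Y = X^{n+1}$ yields
\[
f(X^{n+1}) \leq f(X^n) - \Bigl(\tfrac{1}{\tau} - \tfrac{L}{2}\Bigr)\|X^{n+1} - X^n\|_F^2,
\]
which, since $\tau < 2/L$ makes the bracketed coefficient strictly positive, proves monotonicity. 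Strictness then follows from the standard equivalence $X^{n+1} = X^n \iff X^n = P_\Omega(X^n - \tau \nabla f(X^n)) \iff X^n$ satisfies the first-order condition~\eqref{opt_con}.

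For (ii), I would telescope the descent inequality. Because $f \geq 0$ on $\Omega$,
\[
\Bigl(\tfrac{1}{\tau} - \tfrac{L}{2}\Bigr) \sum_{n=0}^{\infty} \|X^{n+1} - X^n\|_F^2 \leq f(X^0) - \inf_{\Omega} f < \infty,
\]
which forces $\|X^{n+1} - X^n\|_F \to 0$. If $X^{n_k} \to \bar X$ along some subsequence, then $X^{n_k+1} \to \bar X$ as well, and the continuity of $\nabla f$ together with the nonexpansiveness of $P_\Omega$ gives $\bar X = P_\Omega(\bar X - \tau \nabla f(\bar X))$, which is equivalent to~\eqref{opt_con}, so $\bar X$ is a critical point of $(\mathcal P)$. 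Existence of accumulation points is free from the compactness of $\Omega$.

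The main obstacle I anticipate is producing a usable quantitative value of $L$ (and hence of $\bar\tau$) that a practitioner can feed to Algorithm~\ref{GPA_para}, since the fourth-degree structure of $f$ makes a naive Lipschitz estimate loose. I would derive an explicit bound by differentiating the closed form $\nabla f(X) = -4X(S - X^\top X)$, bounding the resulting second-derivative operator uniformly on $\Omega$ using $\|X\|_F \leq \sqrt{N}$ and $\|S\|_F$; everything else in the argument is a routine application of the Beck-type projected-gradient descent framework.
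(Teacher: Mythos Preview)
Your proposal is correct and follows essentially the same route as the paper: establish that $\nabla f$ is $L$-Lipschitz on the convex bounded set $\Omega$, then apply the standard projected-gradient descent theory. The only differences are cosmetic. The paper simply invokes textbook results (\cite[Theorem~9.14]{Beck_14} and \cite[Theorem~6.1]{Rusz_06}) where you spell out the descent lemma, the projection inequality, and the telescoping argument; it sets $\bar\tau = 1/L$ rather than your sharper $2/L$; and it actually carries out the explicit Lipschitz computation you flag as the remaining obstacle, obtaining $L = 4\|S\|_2 + 12N$ via the same decomposition you sketch (triangle inequality on $XX^\top X - YY^\top Y$, submultiplicativity, and the bound $\|X\|_F\le\sqrt{N}$ for $X\in\Omega$).
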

\begin{proof}
 Let $\bar\tau =  1/ L$ with $L:= 4\|S\|_2 + 12N$ and \( \| S \|_2 \) the spectral norm of \( S \) (the largest singular value of \( S \)). We claim that the the operator $X \mapsto \nabla f(X)$ is $L$-Lipschitz over the bounded set $\Omega$, i.e.,
\begin{equation}\label{lip_grad}
    \|\nabla f(X) - \nabla f(Y) \|_F \leq L \|X - Y\|_F, \quad \forall X, Y \in \Omega.
\end{equation}
The assertions of the theorem then follow from~\cite[Theorem~9.14]{Beck_14} and~\cite[Theorem~6.1]{Rusz_06}. 

To justify~\eqref{lip_grad}, let $X$ and $Y$ be arbitrary in $\Omega$. Using the formula~\eqref{gradient_matrix}, the submultiplicative property of the Frobenius norm, and the relation between the  Frobenius norm and the spectral norm, we have
\begin{align}\label{lip_grad1}
\|\nabla f(X) - \nabla f(Y) \|_F & = 4\|X(S - X^\top X) - Y(S - Y^\top Y)\|_F \nonumber\\
& \leq 4 \|(X - Y)S \|_F + 4 \|XX^\top X - YY^\top Y\|_F \nonumber\\
& \leq 4 \|S\|_2 \|X - Y \|_F + 4 \|XX^\top X - YY^\top Y\|_F.
\end{align}
We estimate the second term on the right-hand side of the last inequality as
\begin{align}\label{lip_grad2}
\|XX^\top X - YY^\top Y\|_F & =     \|XX^\top X - XX^\top Y +  XX^\top Y - YY^\top Y\|_F \nonumber\\
& \leq \|XX^\top X - XX^\top Y\|_F + \|XX^\top Y - YY^\top Y\|_F \nonumber\\
& \leq \|XX^\top (X -  Y)\|_F + \|(XX^\top - YY^\top)Y\|_F.
\end{align}
Thus, as before, we get
\begin{align}\label{lip_grad3}
\|XX^\top (X -  Y)\|_F & \leq \|XX^\top\|_F  \|X -  Y\|_F \nonumber\\
& \leq \|X\|^2_F  \|X -  Y\|_F
\end{align} 
and  
\begin{align}\label{lip_grad4}
\|(XX^\top - YY^\top)Y\|_F & \leq \|XX^\top - YY^\top\|_F\|Y\|_F \nonumber\\
& \leq \left(\|XX^\top - XY^\top\|_F + \|XY^\top - YY^\top\|_F\right)\|Y\|_F \nonumber\\
& \leq \left( \|X\|_F\|(X-Y)^\top\|_F + \|X - Y\|_F\|Y^\top\|_F \right)\|Y\|_F \nonumber\\
&= \left(\|X\|_F \|Y\|_F + \|Y\|_F^2\right) \|X - Y\|_F.
\end{align}

It follows from~\eqref{lip_grad1}--\eqref{lip_grad4} that
\begin{align*}
\|\nabla f(X) - \nabla f(Y) \|_F \leq 4\left ( \|S\|_2 + \|X\|^2_F + \|X\|_F \|Y\|_F + \|Y\|_F^2 \right) \|X - Y\|_F.
\end{align*}
Thus, by noticing that $\|X\|_F \leq \sqrt{N}$, $\|Y\|_F \leq \sqrt{N}$ as $X, Y \in \Omega$ we get~\eqref{lip_grad}.
\end{proof}

\medskip
\noindent {\bf Loss Update.}
The first assertion of Theorem~\ref{gpa_conv_thm} suggests that a sufficient decrease of the loss $f(X)$ can be used as a stopping criterion for Algorithm~\ref{GPA_para}. This requires tracking the loss value at each iteration. Directly computing the loss via the formula $f(X) = \|S - X^\top X\|_F^2$ is expensive, since, as before, it needs to access the dense matrix $X^TX \in \mathbb R^{N \times N}$. To overcome this, we will exploit the shared matrix $XX^\top \in \mathbb R^{C \times C}$ and the vectors $X\mathbf{s_1}, X\mathbf{s_2}, \dots, X\mathbf{s_N}$ already computed when updating the gradient in parallel.

\medskip
 Let $\langle \cdot, \cdot \rangle_F$ stand for the \textit{inner product} corresponding the the Frobenius norm, i.e., $\langle X, Y \rangle_F: = \mbox{trace}(X^\top Y)$ for any two same-size real matrices $X, Y$ and $\|X\|_F = \sqrt{\langle X, X \rangle_F}$ for any real matrix $X$. By the property of the inner product, we have $f(X) = \|S - X^TX\|_F^2 = \|S\|_F^2 + \|X^TX\|_F^2 - 2 \langle X^\top X, S \rangle_F$.
Thus, to update $f(X)$, we need to compute two values  $\|X^\top X\|_F^2$ and $\langle X^\top X, S \rangle_F$. 
Given that $\|X^\top X\|_F^2=\|XX^\top\|_F^2$
we can compute the first value 
using $XX^T$, which is of size $C \times C$ and hence its Frobenius norm can be obtained directly by definition.
\begin{algorithm}
\caption{(Parallel GPA for fuzzy clustering with loss update)}
\begin{algorithmic}[1] \label{GPA_para_loss}
\STATE \textbf{Input:} similarity matrix $S \in \mathbb R^{N \times N}$,  cluster-num $C >0$, step size $\tau>0$;
\STATE \textbf{Initialize:} membership matrix $X^0 = [\mathbf x_1^0, \mathbf x_2^0, \dots, \mathbf x_N^0]\in \Omega$;\\
\qquad \qquad  \quad \ $\mbox{loss}^{-1} := N^2$ (largest value possible for the loss);
\STATE for each iteration $n := 0, \dots,$ max-iter:
\STATE  \quad $\mbox{share}^{n} := X^n (X^n)^\top$; 
\STATE \quad for $i := 1, \dots, N$ (in parallel) do:
\STATE \quad \quad $\mbox{prod}_i^{n}  := \quad \ \langle X^n \mathbf s_i, \mathbf x_i^n \rangle$; \qquad \qquad \qquad \qquad \qquad \; [vector inner product]\\
\STATE  \quad $\mbox{merge}^{n} := \mbox{prod}_1^{n} + \mbox{prod}_2^{n} + \dots + \mbox{prod}_N^{n}$; \qquad \qquad \qquad \quad  [scalar addition]\\
\STATE  \quad  $\mbox{loss}^{n} \quad := \|S\|_F^2 + \|\mbox{share}^{n}\|_F^2 - 2 \times \mbox{merge}^{n}$; \qquad \qquad \quad \; [loss calculation]
\STATE  \quad if $\mbox{loss}^{n-1} - \mbox{loss}^{n} \leq \mbox{tol}$:
\STATE \qquad \quad for $i := 1, \dots, N$ (in parallel) do:
\STATE \quad \quad \qquad $\mathbf x_i^{n+1} :=\mathbf x_i^n$; 
\STATE \quad \quad  \quad  break;
\STATE  \quad  else:
\STATE \qquad \quad for $i := 1, \dots, N$ (in parallel) do:
\STATE \quad \quad \qquad $\mbox{\bf grad}_i^{n}:= -4 (X^n \mathbf s_i - \mbox{share}^{n} \mathbf x_i^n)$;  \quad \qquad \qquad \quad \; \ [gradient update]\\
\STATE \quad \quad  \qquad $\mathbf x_i^{n+1}:= P_{\Delta^C} (\mathbf x_i^n - \mbox{step size} \times \mbox{\bf grad}_i^{n})$; \quad \quad \qquad \ [Duchi algorithm]\\
\STATE \textbf{Output:} membership matrix $X^{n+1} = [\mathbf x_1^{n+1}, \mathbf x_2^{n+1}, \dots, \mathbf x_N^{n+1}]$
\end{algorithmic}
\end{algorithm}
Similarly, the second value can be computed by noticing that $\langle X^\top X, S \rangle_F = \mbox{trace}\left(X^\top\big(X S\big)\right)$.
The element $i$, $i = 1, \dots, N$, on the diagonal of the matrix $X^\top\big(X S\big)$ is
\begin{align*}
    [\mbox{row(i)} \  \mbox{of} \ X^\top] \times [\mbox{column(i)} \ \mbox{of} \ XS] =\langle \mathbf x_i, X \mathbf s_i \rangle.
\end{align*}
Thus, the trace of matrix $\left(X^\top\big(X S\big)\right)$ is the sum of $\langle \mathbf x_i, X \mathbf s_i \rangle$ 
over all index $i = 1, 2, \dots, N$. Consequently, we have 
\begin{align*}
\langle X^\top X, S \rangle_F = \sum_{i = 1}^N \langle \mathbf x_i, X \mathbf s_i \rangle.
\end{align*}
It follows 
that
    $f(X) = \|S\|_F^2 + \|XX^\top\|_F^2 -2 \sum_{i = 1}^N \langle \mathbf x_i, X \mathbf s_i \rangle.$
Updating the loss value at each iteration can now be done in parallel, as shown in Algorithm~\ref{GPA_para_loss}. Compared to the base algorithm, Algorithm~\ref{GPA_para_loss} makes use of vectors $\mathbf x_i, X \mathbf s_i$ needed to update the gradient to compute the inner product $\mbox{prod}_i$ in line 6. These inner product values are then merged in line 8 to compute the loss and evaluate it against the tolerance.

\smallskip

\noindent {\bf Nesterov Accelerated Gradient (FISTA).} Regarding the convergence rate of the objective function values $f(X_n)$,  
\begin{algorithm}
\caption{(Parallel FISTA for fuzzy clustering)}
\begin{algorithmic}[1] \label{fista_para_loss}
\STATE \textbf{Input:} similarity matrix $S \in \mathbb R^{N \times N}$,  cluster-num $C >0$, step size $\tau>0$;
\STATE initialize $\bar X^0 = [\bar{\mathbf x}_1^0, \bar{\mathbf x}_2^0, \dots, \bar{\mathbf x}_N^0] \in \Omega$, $X^1 := \bar X^0$, $\overline{\mbox{loss}}^{0}: = f(\bar X^0)$;
\STATE for each iteration $n = 1, \dots,$ max-iter do:\\
{\color{gray} \hspace{1em}//-----\textit{basic GPA}-----}
\STATE  \quad $\mbox{share}^{n} := X^n (X^n)^\top$; 
\STATE  \quad for $i := 1, \dots, N$ (in parallel) do:
\STATE  \qquad $\mbox{\bf grad}_i^{n}:= -4 (X^n \mathbf s_i - \mbox{share}^{n} \mathbf x_i^n)$;
\STATE \quad \quad $\bar{\mathbf x}_i^{n}:= P_{\Delta^C} (\mathbf x_i^n - \mbox{step size} \times \mbox{\bf grad}_i^{n})$;\\
{\color{gray} \hspace{1em}//-----\textit{check loss quality}-----}
 \STATE  \quad $\overline{\mbox{share}}^{n} := \bar X^n (\bar X^n)^\top$;
\STATE \quad for $i := 1, \dots, N$ (in parallel) do:
\STATE \quad \quad $\overline{\mbox{prod}}_i^{n}  := \quad \ \langle \bar X^n \mathbf s_i, \bar{\mathbf x}_i^n \rangle$ ;
\STATE  \quad $\overline{\mbox{merge}}^{n} := \overline{\mbox{prod}}_1^{n} + \overline{\mbox{prod}}_2^{n} + \dots + \overline{\mbox{prod}}_N^{n}$ ;
\STATE  \quad  $\overline{\mbox{loss}}^{n} \quad := \|S\|_F^2 + \|\overline{\mbox{share}}^{n}\|_F^2 - 2 \times \overline{\mbox{merge}}^{n}$ ;
\STATE  \quad if $\overline{\mbox{loss}}^{n} - \overline{\mbox{loss}}^{n-1} \leq \mbox{tol}$:
\STATE \quad \quad  \quad  break;\\
{\color{gray} \hspace{1em}//-----\textit{update inertial parameter and accelerated term}-----}
\STATE \quad 
 $t_{n+1} := \frac{1 + \sqrt{1 + 4t_n^2}}{2}$;
 \STATE  \quad for $i := 1, \dots, N$ (in parallel) do: 
 \STATE \qquad $\mathbf x_i^{n+1} := \bar{\mathbf x}_i^n + \left( \frac{t_n - 1}{t_{n+1}} \right)\left(\bar{\mathbf x}_i^n - \bar{\mathbf x}_i^{n-1}\right)$;
\STATE \textbf{Output:} membership matrix $\bar X^{n} = [\bar{\mathbf x}_1^{n}, \bar{\mathbf x}_2^{n}, \dots, \bar{\mathbf x}_N^{n}]$
\end{algorithmic}
\end{algorithm}
the \textit{fast iterative shrinkage-thresholding algorithm} (FISTA, \cite{Beck_Teboulle_09}) -- a generalized version of the Nesterov accelerated gradient scheme \cite{Nesterov_83} for constrained convex optimization problems -- is well known to speed up the basic gradient projection method. Although there is no theoretical guarantee for nonconvex problems, we would like to examine the performance of the method for fuzzy clustering through numerical experiments.

\medskip
In FISTA, apart from the basic gradient projection step, we need to use not only the information from the current iteration but also from the previous iteration to define the next one using a special inertial term. This increases the computation cost and memory requirements per iteration. To initialize FISTA, we need points $\bar X^0 \in \Omega$, $X^1 := \bar X^0$, value $f(\bar X^0)$, step size $\tau > 0$, tolerance $\mbox{tol} > 0$, and set the inertial 
parameter $t_1 = 1$. Each iteration $n \geq 1$ starts with computing the gradient $\nabla f(X^n)$ and updating  the basic GPA
\begin{align*}
         \bar X^n = P_{\Omega} \left[ X^n - \tau \nabla f(X^n) \right]
    \end{align*}
If the loss per iteration is not high enough, i.e., $f(\bar X^{n-1}) -  f(\bar X^n)\geq \mbox{tol}$, the inertial parameter and the accelerated term are updated with
    \begin{align*}
        t_{n+1} &= \frac{1 + \sqrt{1 + 4t_n^2}}{2} \\
        X^{n+1} &= \bar X^n + \left( \frac{t_n - 1}{t_{n+1}} \right) \left( \bar X^n - \bar X^{n-1} \right).
    \end{align*}
    
A parallel version of FISTA is described in Algorithm~\ref{fista_para_loss} where we follow the logic of parallelization of the basic GPA. 


\section{Refining the set of critical points}
Recall that due to the non-convexity of the problem ($\mathcal P$), GPA can guarantee at most a critical point, an $\bar X \in \Omega$ satisfying the first-order optimality condition~\eqref{opt_con}. There could be many such points, and thus it is ideal to refine the set of critical points into a smaller set containing local solutions. Aiming at this, we will provide second-order optimality conditions for ($\mathcal P$).

\medskip

As the constraint set $\Omega$ is convex, the first-order optimality condition~\eqref{opt_con} can be rewritten (\cite[Lemma~3.13 and Theorem~3.24]{Rusz_06}) as 
\begin{align}\label{1st_cond}
    \langle \nabla f(\bar X), V \rangle_F \geq 0, \quad  \forall V \in T(\bar X; \Omega)
\end{align}
where $T(\bar X; \Omega)$ stands for the \textit{Bouligand-Severi tangent cone} to $\Omega$ at $\bar X \in \Omega$.   A direction $V \in \mathbb R^{C \times N}$ is called (\cite[Definition~3.11]{Rusz_06}) a \textit{tangent direction} to the set $\Omega$ at $\bar X \in \Omega$,  $V \in T(\bar X; \Omega)$, if there exist sequences of points $X^n \in \Omega$ and scalars $\tau_n > 0$, $n = 1, 2, \dots,$ such that $\tau_n \downarrow 0$ and 
\begin{equation*}
    V = \lim_{n \to \infty} \dfrac{X^n - \bar X}{\tau_n}.
\end{equation*} The concept of tangent direction is fundamental in analyzing perturbations around $\bar X$ -- a candidate for minimizers of the optimization problem -- via points in $\Omega$ converging to $\bar X$.

\medskip
 The gradient projection method updates by stepping in the negative gradient direction and projecting onto the constraint set $\Omega$. In~\eqref{1st_cond}, if a direction $V \in T(\bar{X}; \Omega)$ satisfies $\langle \nabla f(\bar{X}), V \rangle_F = 0$, the directional derivative of $f$ at $\bar{X}$ along $V$ vanishes, indicating that $f$ does not decrease in that direction. As a result, the method may get stuck at $\bar{X}$. In such cases, we need to exploit further second-order approximations, the Hessian of the objective function and the second-order tangent cone of the constraint set, to confirm if $\bar{X}$ is a local minimizer.

\medskip
 Let $\bar X \in \Omega$ and $\bar V \in T(\bar X; \Omega)$. One calls (\cite[Definition~3.41]{Rusz_06}) $W \in \mathbb R^{C \times N}$ a \textit{second-order tangent direction} to the set $\Omega$ at the point $\bar X$ in direction $\bar V$ if there exist sequences of points $X^n \in \Omega$ and scalars $\tau_n > 0$, $n = 1, 2, \dots,$ such that $\tau_n \downarrow 0$ and 
\begin{equation*}
    W = \lim_{n \to \infty} \dfrac{X^n - \bar X - \tau_n\bar V}{\frac{1}{2}\tau_n^2}.
\end{equation*} The set of second-order tangent directions to $\Omega$ at the point $\bar X$ in direction $\bar V$ is denoted by~$T^2(\bar X, \bar V; \Omega)$.

\medskip
The next theorem provides us with second-order necessary optimality conditions for ($\mathcal P$). These conditions pair second-order information of the objective function (Hessian) with first-order information of the constraint set (tangent cone) and vice versa,  first-order information of the objective function (gradient) with second-order information of the constraint set (second-order tangent cone).

\begin{theorem}\label{2nd_con_thm}
Let $\bar X$ be a local minimizer of ($\mathcal P$). In addition, let $\bar V \in \mathbb R^{C\times N}$ be a direction such that $\langle\nabla f(\bar X), \bar V \rangle_F = 0$ and $\bar V \in T(\bar X; \Omega)$. Then
\begin{align}\label{2nd_con_a}
 \langle \nabla^2 f(\bar X) \bar{V}, \bar{V} \rangle_F \geq 0  
\end{align} 
and 
\begin{align}\label{2nd_con_b}
 \langle \nabla f(\bar{X}), W \rangle_F \geq 0, \quad \forall W \in T^2(\bar{X}, \bar{V}; \Omega),
\end{align}
where the Hessian $\nabla^2 f(\bar X)$ 
maps $V$ 
to
\begin{equation}\label{hessian}
\nabla^2 f(\bar X)V = -4 V (S - \bar X^T \bar X) + 4 \bar X (V^T \bar X + \bar X^T V) \in \mathbb{R}^{C \times N}
\end{equation} and the second-order tangent cone to $\Omega$ at $\bar X$ in direction $\bar V$ is given by
\begin{align}\label{2nd_tangent_cone}
\!\!T^2(\bar X,\! \bar V; \Omega)\! = \left\{ W\! =\! (w_{ki}) \in \mathbb{R}^{C\! \times\! N} \middle| \sum_{k=1}^C w_{ki}\! =\! 0,  \forall i;  w_{ki} \geq 0 \text{ if} \begin{cases}
 \bar{x}_{ki}\! =\! 0\\    
 \bar{v}_{ki}\! =\! 0
\end{cases}\!\!\right\}.
\end{align}
\end{theorem}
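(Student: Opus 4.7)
The plan is to prove the theorem in three essentially independent pieces that combine at the end: (a) derive the Hessian formula~\eqref{hessian} by direct differentiation of~\eqref{gradient_matrix}; (b) characterize the second-order tangent cone~\eqref{2nd_tangent_cone} by exploiting the polyhedrality of $\Omega$; and (c) apply a general second-order necessary optimality argument in the spirit of~\cite[Ch.~3]{Rusz_06} together with the conic structure of $T^2(\bar X, \bar V; \Omega)$ to obtain~\eqref{2nd_con_a} and~\eqref{2nd_con_b} as two separate inequalities.

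For (a), I would differentiate $\nabla f(X) = -4X(S - X^\top X)$ at $\bar X$ in direction $V$. The linear term $-4XS$ contributes $-4VS$, while the cubic term $4XX^\top X$ contributes, via the product rule, $4(V\bar X^\top \bar X + \bar X V^\top \bar X + \bar X \bar X^\top V)$. Collecting $-4V(S - \bar X^\top \bar X)$ and $4\bar X(V^\top \bar X + \bar X^\top V)$ yields~\eqref{hessian}.

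For (b), I note that $\Omega$ is a polyhedron defined by $x_{ki} \geq 0$ and $\sum_k x_{ki} = 1$ (the bound $x_{ki} \leq 1$ is redundant). Given any sequence $X^n \in \Omega$ and $\tau_n \downarrow 0$ with $X^n = \bar X + \tau_n \bar V + \tfrac{1}{2}\tau_n^2 W + o(\tau_n^2)$, I would expand each constraint: combining the equalities with $\sum_k \bar x_{ki} = 1$ and $\sum_k \bar v_{ki} = 0$ forces $\sum_k w_{ki} = 0$ for every $i$; the inequality $x^n_{ki} \geq 0$ is automatic when $\bar x_{ki} > 0$, automatic when $\bar x_{ki} = 0$ but $\bar v_{ki} > 0$ (since $\bar v_{ki}$ dominates to first order), and degenerates to $w_{ki} \geq 0$ only in the doubly active case $\bar x_{ki} = \bar v_{ki} = 0$ (the case $\bar v_{ki} < 0$ cannot occur, as $\bar V \in T(\bar X; \Omega)$). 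The reverse inclusion is obtained by constructing an explicit admissible sequence realizing any $W$ satisfying these conditions, producing~\eqref{2nd_tangent_cone}.

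For (c), a second-order Taylor expansion of $f$ along any such $X^n$, combined with $\langle \nabla f(\bar X), \bar V \rangle_F = 0$, yields
\begin{equation*}
f(X^n) - f(\bar X) = \tfrac{1}{2}\tau_n^2 \bigl(\langle \nabla^2 f(\bar X)\bar V, \bar V \rangle_F + \langle \nabla f(\bar X), W \rangle_F\bigr) + o(\tau_n^2).
\end{equation*}
Local minimality of $\bar X$ forces the parenthesized sum to be nonnegative for every $W \in T^2(\bar X, \bar V; \Omega)$. The explicit description~\eqref{2nd_tangent_cone} shows that $T^2(\bar X, \bar V; \Omega)$ is a cone containing $0$: taking $W = 0$ yields~\eqref{2nd_con_a}, and then replacing an arbitrary $W \in T^2(\bar X, \bar V; \Omega)$ by $tW$ and letting $t \to +\infty$ yields~\eqref{2nd_con_b}. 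I expect the case analysis needed both to characterize and to explicitly realize $T^2(\bar X, \bar V; \Omega)$ to be the main obstacle; the Hessian derivation and the separation of the two inequalities are comparatively mechanical once polyhedrality has been invoked.
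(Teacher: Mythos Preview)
Your proposal is correct and reaches the same conclusions, but the route differs from the paper's in two places. For the second-order tangent cone, the paper does not argue directly from sequences as you do; it invokes the iterated tangent-cone identity $T^2(\bar X,\bar V;\Omega)=T(\bar V;T(\bar X;\Omega))$ for polyhedra (\cite[Lemma~3.43]{Rusz_06}) and then reads off~\eqref{2nd_tangent_cone} by applying the first-order tangent formula~\eqref{1st_tangent_cone} twice. For the optimality inequalities, the paper cites \cite[Theorem~3]{An_Yen_21} as a black box to obtain~\eqref{2nd_con_a} and~\eqref{2nd_con_b} separately, whereas you derive the \emph{combined} inequality $\langle \nabla^2 f(\bar X)\bar V,\bar V\rangle_F + \langle \nabla f(\bar X),W\rangle_F \ge 0$ by Taylor expansion (this is essentially \cite[Theorem~3.45]{Rusz_06}) and then split it yourself using that $T^2(\bar X,\bar V;\Omega)$ is a cone containing $0$. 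Your splitting argument---$W=0$ gives~\eqref{2nd_con_a}, then $W\mapsto tW$, $t\to\infty$ gives~\eqref{2nd_con_b}---is exactly what underlies the cited An--Yen result in the polyhedral case, so you have in effect reproved the lemma the paper imports. The paper's approach is shorter by delegation; yours is more self-contained and makes explicit why polyhedrality (hence conicity of $T^2$) is the mechanism that decouples the two conditions.
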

\begin{proof}
Since the objective function is $C^2$-smooth and the constraint set is a polyhedral, applying~\cite[Theorem~3]{An_Yen_21} we get~\eqref{2nd_con_a} and~\eqref{2nd_con_b}. It remains to show formulas~\eqref{hessian} and~\eqref{2nd_tangent_cone}.

\medskip

Given $\bar X \in \mathbb R^{C \times N}$, the Hessian $\nabla^2 f(\bar X)$ is the Fr\'echet derivative of the gradient operator $X \mapsto \nabla f(X) = -4X(S - X^TX)$ at $\bar X$. First, it is not difficult to verify that the operator $$V \mapsto \nabla^2 f(\bar X)V:=-4 V (S - \bar X^T \bar X) + 4 \bar X (V^T \bar X + \bar X^T V)$$ is a continuous linear operator from $\mathbb R^{C \times N}$ to $\mathbb R^{C \times N}$. We next justify that 
\begin{equation}\label{hessian_lim}
\lim_{\|V\|_F \to 0} \frac{\|\nabla f(\bar X + V) - \nabla f(\bar X) - \nabla^2 f(\bar X)V\|_F}{\|V\|_F} = 0
\end{equation}
with $\nabla^2 f(\bar X)V$ given in~\eqref{hessian}.

\medskip
From the formula for the gradient, we have \begin{align*}
\nabla f(\bar X + V) 
&= -4(\bar X + V)(S - \bar X^T \bar X) 
+ 4(\bar X + V)(V^T \bar X + \bar X^T V + V^T V).
\end{align*}
Thus,
\begin{align*}
\nabla f(\bar X + V) - \nabla f(\bar X) 
&= -4 V(S - \bar X^T \bar X) + 4 \bar X (V^T \bar X + \bar X^T V) \\
&\quad + 4 V (V^T \bar X + \bar X^T V) + 4 (\bar X + V) V^T V.
\end{align*}
Now, using~\eqref{hessian} we have
\begin{align*}
\nabla f(\bar X + V) - \nabla f(\bar X) - \nabla^2 f(\bar X)V
& = 4 V (V^T \bar X + \bar X^T V) + 4 (\bar X + V) V^T V.
\end{align*}
Hence, bounding the norm yields
\begin{align*}
\|\nabla f(\bar X\! +\! V)\! -\! \nabla f(\bar X)\! -\! \nabla^2\! f(\bar X)V\|_F & =\|4 V (V^T \bar X + \bar X^T V) + 4 (\bar X + V) V^T V \|_F \\
& \leq 4( \|V (V^T\! \bar X\! +\! \bar X^T\! V) \|_F\! +\! \|(\bar X\! +\! V) V^T\! V \|_F)\\
& \leq 4 (2 \|V\|_F^2 \|\bar X \|_F + (\|\bar X\|_F + \|V \|_F) \| V\|_F^2).
\end{align*}
It follows that
\begin{align*}
 \lim_{\|V\|_F \to 0} & \frac{\|\nabla f(\bar X + V) - \nabla f(\bar X) - \nabla^2 f(\bar X)V\|_F}{\|V\|_F} \\ &= \lim_{\|V\|_F \to 0} 4 (2 \|V\|_F \|\bar X \|_F + (\|\bar X\|_F + \|V \|_F) \| V\|_F)= 0,
\end{align*}
justifying~\eqref{hessian_lim}. 

\medskip
We now prove~\eqref{2nd_tangent_cone}. By the  polyhedral structure,  
\begin{equation*}
    \Omega = \left\{X = (x_{ki}) \in \mathbb{R}^{C \times N} \ \middle| \ \sum_{k = 1}^C x_{ki} = 1, \ \forall i, \ \text{and} \ x_{ki} \geq 0, \ \forall k, i \right\},
\end{equation*} the tangent cone to $\Omega$ at $\bar{X} \in \Omega$ can be given explicitly as 
\begin{equation}\label{1st_tangent_cone}
T(\bar X; \Omega) = \left\{ V = (v_{ki}) \in \mathbb{R}^{C\! \times\! N} \middle|  \sum_{k=1}^C v_{ki} = 0, \ \forall i, \, \mbox{and } v_{ki} \geq 0 \text{ if } \bar{x}_{ki} = 0\right\};
\end{equation} see, e.g., \cite[formula~(3.13)]{Ban_etal_11}. Since the latter is also a polyhedral, the second-order tangent cone $T^2(\bar X, \bar V; \Omega)$ can be computed (\cite[Lemma 3.43]{Rusz_06}) via the (first-order) tangent cone as $T^2(\bar X, \bar V; \Omega) = T\left(\bar V; T(\bar X; \Omega)\right)$. Thus, invoking~\eqref{1st_tangent_cone} we obtain~\eqref{2nd_tangent_cone}, 
which completes the proof.
\end{proof}

\begin{remark}{\rm 
As pointed out in~\cite{An_Yen_21}, conditions~\eqref{2nd_con_a} and~\eqref{2nd_con_b} together form a stronger version of the second-order necessary optimality condition stated in \cite[Theorem~3.45] {Rusz_06}: \textit{If $\bar X \in \Omega$ is a local minimizer, then for every $\bar V \in T(\bar X; \Omega)$ with $\langle\nabla f(\bar X), \bar V \rangle_F = 0$ one has $$\langle \nabla^2 f(\bar X) \bar{V}, \bar{V} \rangle_F  + \langle \nabla f(\bar{X}), W \rangle_F\geq 0, \quad \forall W \in T^2(\bar X, \bar V; \Omega).$$} This, in particular, makes the procedure of refining the set of critical points easier by showing separately that if either ~\eqref{2nd_con_a} or~\eqref{2nd_con_b} is violated at $\bar X$, then $\bar X$ is \textit{not} a local solution of~($\mathcal P$).}
\end{remark}

\begin{remark}{\rm 
Condition~\eqref{2nd_con_a} is satisfied at $\bar X \in \Omega$ for all direction $\bar V \in T(\bar X; \Omega)$ and $\langle\nabla f(\bar X), \bar V \rangle_F = 0$ if the optimal value $\mu_1$ of the minimization problem
\begin{equation}\label{subprob1}
    \displaystyle\min_{V \in \mathbb R^{C \times N}} \left\{\langle \nabla^2 f(\bar X) V, V \rangle_F \mid V \in T(\bar X; \Omega) \; \& \;\langle\nabla f(\bar X), V \rangle_F = 0  \right\} \tag{$\mathcal P_1$}
\end{equation}
is non-negative. For a fixed $\bar X \in \Omega$, the constraint set of~\eqref{subprob1} is a polyhedron, while the objective function is of quadratic form. Solving nonconvex quadratic programs is NP-hard in general, which requires employing global optimization techniques like branch-and-cut using spatial-branching, as it is available, for example, in solvers such as
SCIP\footnote{\href{https://scopopt.org}{https://scipopt.org}}~\cite{SCIP8},
BARON\footnote{\href{https://minlp.com/}{https://minlp.com}}, Gurobi\footnote{\href{https://www.gurobi.com/events/non-convex-quadratic-optimization/}{https://www.gurobi.com}}, or FICO Xpress \footnote{\href{https://www.fico.com/en/products/fico-xpress-optimization}{https://www.fico.com}}. 

Similarly, condition~\eqref{2nd_con_b} is fulfilled at a $\bar X \in \Omega$ if the optimal value $\mu_2$ of the linear program
\begin{equation}\label{subprob2}
   \! \displaystyle\min_{W, V \in \mathbb R^{C \times N}}\! \left\{\langle \nabla f(\bar{X}), W \rangle_F \mid W \in T^2(\bar{X}, V; \Omega), V \in T(\bar X; \Omega), \langle\nabla f(\bar X), V \rangle_F = 0  \right\} \tag{$\mathcal P_2$}
\end{equation}
is non-negative. Compared to~\eqref{subprob1}, solvers for~\eqref{subprob2} are more available even at large scales, for example, additionally to the above, also HiGHS\footnote{\href{https://highs.dev/}{https://highs.dev}}, COPT\footnote{\href{https://www.copt.de}{https://www.copt.de}}, and CPLEX\footnote{\href{https://www.ibm.com/products/ilog-cplex-optimization-studio}{https://www.ibm.com}}. Note also from the formulas of the tangent cones~\eqref{2nd_tangent_cone} and~\eqref{1st_tangent_cone} that $$T^2(\bar{X}, V; \Omega) =  T(\bar X; \Omega), \quad \forall V \in T(\bar X; \Omega)$$ when $\bar X = (x_{ki})$ is a critical point in the interior of $\Omega$, $x_{ki} > 0$ for all $k, i$. Thus, condition~\eqref{2nd_con_b} is already satisfied at $\bar X$ by the first-order optimality condition~\eqref{1st_cond} and solving~\eqref{subprob2} is not needed.  
}
\end{remark}
We conclude the section with a toy example where GPA identifies multiple critical points, highlighting its strengths and the application of Theorem~\ref{2nd_con_thm} in refining them.

\begin{ex}\label{ex_7node_graph}{\rm 
Consider the fuzzy clustering problem \((\mathcal{P})\) for a 7-node graph, illustrated in Fig.~\ref{7n2c-pic}. The graph is designed to mimic a small citation network, where each node represents a paper. The nodes are organized into two groups: Group 1, consisting of papers \(1, 2, 3, 4\), and Group 2, consisting of papers \(4, 5, 6, 7\). Paper \(4\) serves as a bridge between the two groups -- it cites papers \(2\) and \(3\) (from Group 1), and is in turn cited by papers \(5\) and \(6\) (from Group 2). The similarity matrix \(S\) is constructed from the graph's adjacency matrix by adding $1$s along the diagonal. This reflects the assumption that papers are ``similar'' if one cites the other, i.e., \(s_{ij} = 1\) if paper \(i\) cites or is cited by paper \(j\), and each paper is considered ``most similar'' to itself with \(s_{ii} = 1\).

\begin{figure}[h!]
  \centering
  \begin{subfigure}[b]{0.45\textwidth}
    \centering
    \resizebox{0.75\linewidth}{!}{
    \begin{tikzpicture}[every node/.style={circle, draw=black, minimum size=0.5cm, inner sep=0pt}, thick]
      \node (4) at (0, 0) {4};
      \node (2) at (-1, 1) {2};
      \node (3) at (1, 1) {3};
      \node (1) at (-2, 2) {1};
      \node (5) at (-1, -1) {5};
      \node (6) at (1, -1) {6};
      \node (7) at (2, -2) {7};

      \draw (1) -- (2);
      \draw (2) -- (3);
      \draw (2) -- (4);
      \draw (3) -- (4);
      \draw (4) -- (5);
      \draw (4) -- (6);
      \draw (5) -- (6);
      \draw (6) -- (7);
    \end{tikzpicture}
    }
    \caption{Graph with $7$ nodes}
  \end{subfigure}
  \hfill
  \begin{subfigure}[b]{0.45\textwidth}
    \centering
    \[
    S = \begin{bmatrix}
    1 & 1 & 0 & 0 & 0 & 0 & 0 \\
    1 & 1 & 1 & 1 & 0 & 0 & 0 \\
    0 & 1 & 1 & 1 & 0 & 0 & 0 \\
    0 & 1 & 1 & 1 & 1 & 1 & 0 \\
    0 & 0 & 0 & 1 & 1 & 1 & 0 \\
    0 & 0 & 0 & 1 & 1 & 1 & 1 \\
    0 & 0 & 0 & 0 & 0 & 1 & 1
    \end{bmatrix}
    \]
    \caption{Similarity matrix $S$}
  \end{subfigure}
  \caption{A $7$-nodes graph and its similarity matrix $S$}\label{7n2c-pic}
\end{figure}

\medskip
Given that the graph naturally forms two loosely connected clusters with node \(4\) in common, we set the number of clusters to \(C = 2\).  We expect GPA to produce a membership matrix \(\bar{X} \in \mathbb{R}^{2 \times 7}\) that accurately captures this structure. Algorithm~\ref{GPA_para} was implemented using Jupyter Notebook on a MacBook with a 1.4~GHz Quad-Core Intel Core~i5 processor, a step size of \(0.1\), and was terminated when the objective function values ceased to decrease. We obtained three different outputs depending on the choice of starting points; see Table~\ref{7n2c_result_table}.

\medskip
 In Scenario 1, where the starting point was initialized randomly, the objective function decreased to a value of \(6.49\). The resulting membership matrix \(\bar{X}_1 = (\bar{x}_{ki})\) closely aligns with the expected structure: papers \(2\) and \(6\) strongly define the two clusters (\(\bar{x}_{12} = 1\), \(\bar{x}_{22} = 0\); \(\bar{x}_{16} = 0\), \(\bar{x}_{26} = 1\)), paper \(4\) participates in both clusters (\(\bar{x}_{14}, \bar{x}_{24} \approx 0.5\)), papers \(1\) and \(3\) are mainly associated with Cluster 1 (\(\bar{x}_{11}, \bar{x}_{13} \approx 1\)), and papers \(5\) and \(7\) are primarily associated with Cluster 2 (\(\bar{x}_{25}, \bar{x}_{27} \approx 1\)). We repeated the experiment multiple times using different randomly generated initial points and found the results to be stable. We conclude that GPA successfully recovers the two underlying clusters in this graph instance.

 \medskip  
In Scenario 2, the starting point was biased: all seven nodes were assigned to Cluster~1, and none to Cluster~2 (i.e., row~$1$ was filled with $1$s, and row~$2$ with $0$s). The objective function decreased to a value of \(8.84\). The resulting membership matrix \(\bar{X}_2 = (\bar{x}_{ki})\) exhibits a different structure: paper~$4$ anchors Cluster~$1$, grouping together papers~$2$, $3$, $5$, and~$6$, all of which are directly connected via citations. Meanwhile, papers~$1$ and~$7$ comprise Cluster~$2$. Although this solution yields a higher objective function value than in Scenario~1, it remains plausible -- effectively grouping strongly connected papers while separating those with weaker connections.
\begin{table}[ht!]
\begin{center}
\renewcommand{\arraystretch}{2}
\begin{tabular}{>{\raggedright\arraybackslash\small}m{7.5cm}>{\centering\arraybackslash}m{3.75cm}}
\hline
\textbf{Outputs by GPA for the $7$-node graph} & \textbf{Illustration}  \\
\hline
\noindent{\bf Scenario 1.} The starting point is initialized randomly, the loss is at $6.49$ with $\bar X_1:$ 

\bigskip
$\begin{bmatrix}
0.8835 & \bf{1.} & 0.9096 & \bf{0.5202} & 0.1163 & \bf{0.} & 0.0906 \\
0.1165 & \bf{0.} & 0.0904 & \bf{0.4798} & 0.8837 & \bf{1.} & 0.9094
\end{bmatrix}$
& \medskip
\begin{tikzpicture}[thick,scale=0.7, every node/.style={circle, draw=black, minimum size=0.5cm, inner sep=0pt}]

  \node[fill=blue!30] (1) at (-2, 2) {1};
  \node[fill=blue!60] (2) at (-1, 1) {2};
  \node[fill=blue!40] (3) at (1, 1) {3};
  \node[fill=blue!25!red!25] (4) at (0, 0) {4}; 
  \node[fill=red!40] (5) at (-1, -1) {5};
  \node[fill=red!60] (6) at (1, -1) {6};
  \node[fill=red!30] (7) at (2, -2) {7};

  \draw (1) -- (2);
  \draw (2) -- (3);
  \draw (2) -- (4);
  \draw (3) -- (4);
  \draw (4) -- (5);
  \draw (4) -- (6);
  \draw (5) -- (6);
  \draw (6) -- (7);
  
  \draw[blue, thick, dashed, rounded corners=12pt] 
    (-2.5, -0.5) rectangle (1.5, 2.5);
  \draw[red, thick, dashed, rounded corners=12pt] 
    (-1.8, -2.5) rectangle (2.5, 0.5);
    
\end{tikzpicture} \\
\hline
\noindent{\bf Scenario 2.} The starting point has $1$s in row $1$ and $0$s in row $2$; the loss is at~$8.84$~with~$\bar X_2:$

\bigskip
$\begin{bmatrix}
0.1308 & 0.6435 & 0.8692 & \bf{1.} & 0.8692 & 0.6435 & 0.1308 \\
0.8692 & 0.3565 & 0.1308 & \bf{0.} & 0.1308 & 0.3565 & 0.8692
\end{bmatrix}$ 
& \medskip
\quad \begin{tikzpicture}[thick,scale=0.7,  every node/.style={circle, draw=black, minimum size=0.5cm, inner sep=0pt}]
  \node[fill=red!40] (1) at (-2, 2) {1};
  \node[fill=blue!25] (2) at (-1, 1) {2};
  \node[fill=blue!40] (3) at (1, 1) {3};
  \node[fill=blue!60] (4) at (0, 0) {4};
  \node[fill=blue!40] (5) at (-1, -1) {5};
  \node[fill=blue!25] (6) at (1, -1) {6};
  \node[fill=red!40] (7) at (2, -2) {7};

  \draw (1) -- (2);
  \draw (2) -- (3);
  \draw (2) -- (4);
  \draw (3) -- (4);
  \draw (4) -- (5);
  \draw (4) -- (6);
  \draw (5) -- (6);
  \draw (6) -- (7);
  
  \draw[blue, thick, dashed, rounded corners=10pt]
    (-1.4, -1.4) rectangle (1.4, 1.4);

  \draw[red, thick, dashed, rounded corners=10pt]
  (1.6, 1.6)  -- (1.6, -2.5) -- (2.5, -2.5) -- (2.5, 2.5) --(-2.5, 2.5) -- (-2.5, 1.6) -- (1.6, 1.6);

\end{tikzpicture} \\
\hline
\noindent{\bf Scenario 3.} The starting point is the matrix with $0.5$s in each element; the loss is at~$12.25$~with~$\bar X_3:$

\bigskip
$\begin{bmatrix}
0.5 & 0.5 & 0.5 & 0.5 & 0.5 & 0.5 & 0.5 \\
0.5 & 0.5 & 0.5 & 0.5 & 0.5 & 0.5 & 0.5
\end{bmatrix}$
& \medskip
\begin{tikzpicture}[thick,scale=0.7, every node/.style={circle, draw=black, minimum size=0.5cm, inner sep=0pt}]
  \node[fill=blue!25!red!25] (1) at (-2, 2) {1};
  \node[fill=blue!25!red!25] (2) at (-1, 1) {2};
  \node[fill=blue!25!red!25] (3) at (1, 1) {3};
  \node[fill=blue!25!red!25] (4) at (0, 0) {4}; 
  \node[fill=blue!25!red!25] (5) at (-1, -1) {5};
  \node[fill=blue!25!red!25] (6) at (1, -1) {6};
  \node[fill=blue!25!red!25] (7) at (2, -2) {7};

  \draw (1) -- (2);
  \draw (2) -- (3);
  \draw (2) -- (4);
  \draw (3) -- (4);
  \draw (4) -- (5);
  \draw (4) -- (6);
  \draw (5) -- (6);
  \draw (6) -- (7);

  \draw[blue, thick, dashed, rounded corners=12pt] 
    (-2.5, -2.5) rectangle (2.5, 2.5);
  \draw[red, thick, dashed, rounded corners=12pt] 
    (-2.3, -2.3) rectangle (2.7, 2.7);
\end{tikzpicture} \\
\hline
\end{tabular}
\end{center}
\caption{Outputs by GPA with different starting points for the $7$-node graph}
\label{7n2c_result_table}
\end{table}

\medskip  
In Scenario 3, the starting point was uniformly initialized with \(0.5\) in all entries, implying that each paper belongs equally to both clusters. The algorithm terminated after the first iteration, yielding a membership matrix \(\bar{X}_3\) identical to the initial point, with an objective function value of \(12.25\). Though being a critical point, $\bar X_3$ provides no meaningful insight into the graph’s structure.

\medskip  
It turns out that \(\bar{X}_3\) is \textit{not} a local solution to \((\mathcal{P})\), as it violates the second-order necessary optimality condition~\eqref{2nd_con_a}:  
\[
\langle \nabla^2 f(\bar{X}_3)\, \bar{V}, \bar{V} \rangle_F = -4 < 0
\]  
with  
\[
\bar{V} = \begin{bmatrix}
1 & 0 & 0 & 0 & 0 & 0 & 0 \\
-1 & 0 & 0 & 0 & 0 & 0 & 0
\end{bmatrix} \in T(\bar{X}_3; \Omega) \ \mbox{ satisfying } \ \langle \nabla f(\bar{X}_3), \bar V \rangle_F = 0.
\]  

The remaining candidates for local solutions are $\bar{X}_1$ and $\bar{X}_2$. Since $\bar{X}_2$ yields a higher objective function value than $\bar{X}_1$, it cannot be a global solution. Numerical results from the Gurobi solver confirm that $\bar{X}_1$ is a global optimum.

\medskip
As a side note, the Louvain and Leiden algorithms (implemented using the \texttt{igraph} and \texttt{leidenalg} libraries in Python) returned two disjoint clusters, $\{1, 2, 3\}$ and $\{4, 5, 6, 7\}$, which capture less structural information from the graph than the GPA solution $\bar{X}_1$.

}
\end{ex}
\section{Experiments with medium and large datasets}
In this section, we examine the performance of the parallel GPA and parallel FISTA algorithms on medium- to large-scale instances. We consider two datasets built from real citation networks: 
\begin{itemize} 
\item a medium-sized instance consisting of approx. $700$k articles linked by $4.6$ million citations, derived from the Web of Science; 
\item a large-sized instance comprising about $60$ million articles linked by $1.2$ billion citations, based on the OpenAlex dataset. 
\end{itemize} Due to their real and complex graph structures, these instances provide a suitable setting to observe how the parallel GPA and parallel FISTA improve objective function values over iterations.
Additionally, we include a synthetic medium-sized instance with two known clusters, enabling us to evaluate the quality of solutions produced by the two algorithms. Details on data preparation are provided below.

\medskip
\noindent{\textbf{WoS citation subgraph.}} We utilized the Web of Science (WoS) database \cite{webofscience}, supported by the German Competence Network for Bibliometrics, and extracted a subgraph of papers in two subjects: \textit{Mathematics} (M) and \textit{Operations Research \& Management Science} (OR\&MS) after cleaning. Nodes correspond to scholarly works and directed edges to citation relationships. Cleaning excluded non-English papers, those missing key metadata (year, author, WoS unique identifier, title, journal), papers beyond the range 2000--2024, and invalid references, resulting in 964,811 nodes and 5,087,058 edges. After extracting the largest connected component (796,467 nodes, 4,930,134 edges) and iteratively removing degree-1 nodes, we refined the graph to 722,623 nodes and 4,856,290 edges. For our future embedding tasks, we removed papers lacking abstracts and reprocessed the graph, yielding a final version with 698,135 nodes and 4,590,190 edges. 

\medskip
\noindent{\textbf{OpenAlex connected citation graph.}} The OpenAlex connected citation graph represents the largest connected component of the OpenAlex citation dataset \cite{openalex} after cleaning and preprocessing.  The dataset, based on the OpenAlex snapshot (July 31, 2024), was cleaned by excluding works without journal, authors, or title, with years outside 1901–2024, non-English language, non-article/chapter types, or marked as retracted. Invalid references were also removed. After building the graph and extracting its largest connected component using a BFS-like algorithm, the final graph after degree-1 removal contains 59,343,462 nodes, 1,176,978,458 edges.

\medskip
\noindent{\textbf{Artificial graph.}} An artificial graph was constructed to mirror the scale of the WoS citation subgraph using a two-cluster Erdős–Rényi model. Two random graphs were generated separately: Cluster 1 (500,000 nodes, edge probability $p_{c1} = 1200/499999$) and Cluster 2 (250,000 nodes, edge probability $p_{c2} = 1600/249999$), with Cluster 2 nodes re-indexed to avoid ID overlaps before merging. A fixed number $200$ of inter-cluster edges was then added by randomly connecting node pairs between the two clusters to control the level of inter-connectivity. After merging, nodes with degree smaller than $1$ were iteratively pruned, and the final graph was verified to form a single connected component. 

\medskip
\noindent{\bf Hardware.} All of the experiments in this section were conducted on a high-performance computing system equipped with two Intel® Xeon®  Gold $6132$ CPUs, providing a total of $28$ cores and $56$ threads at $2.60$ GHz base clock speed ($3.7$ GHz max). The system includes $376$ GiB of RAM and four NVIDIA Tesla V$100$-SXM$2$-$16$GB GPUs with $5.120$ CUDA cores and $16$GB HBM$2$ memory. The setup ran on CUDA $12.2$ with NVIDIA driver $535.183.01$. 

\medskip
\noindent{\bf Hyperparameters.} For every graph, the number of clusters $C$ was set to $2$ and the similarity matrix $S$ was formed by adding ones on the diagonal of the adjacency matrix, as done with the $7$-node tested graph in~Example~\ref{ex_7node_graph}. 

\medskip
To get the algorithms running, it remains to initialize a membership matrix $X^0 \in \Omega$ and to choose a step size $\tau$.  Theorem~\ref{gpa_conv_thm} states that the objective function values are non-increasing over iterations of the parallel GPA if the constant step size $\tau$ is small enough, regardless of the choice of $X^0 \in \Omega$. Thus, unless otherwise stated, we use a randomly generated $X^0 \in \Omega$. Shown in the proof of the theorem, a viable step size is  $$\tau \leq \bar\tau =  \dfrac{1}{4\|S\|_2 + 12N}$$ with \( \| S \|_2 \) the spectral norm of \( S \). As computing the spectral norm of \( S \) can be expensive with large datasets, we replace $\|S\|_2$ with $\|S\|_F$. The latter is easy to compute in our situation, by counting the number of non-zeros in $S$, as $S$ is obtained from the adjacency matrix of the graph with $1$s on the diagonal. A sufficiently small step size for the medium-size instance is $10^{-8}$, and for the large-size instance $10^{-10}$. 
We stopped the computation when there was no further decrease in the loss.

\medskip
\noindent{\bf Experiment 1: How parallel GPA and FISTA can reduce the loss?} \\
\begin{figure}[h!]
  \centering
  \includegraphics[width=\linewidth]{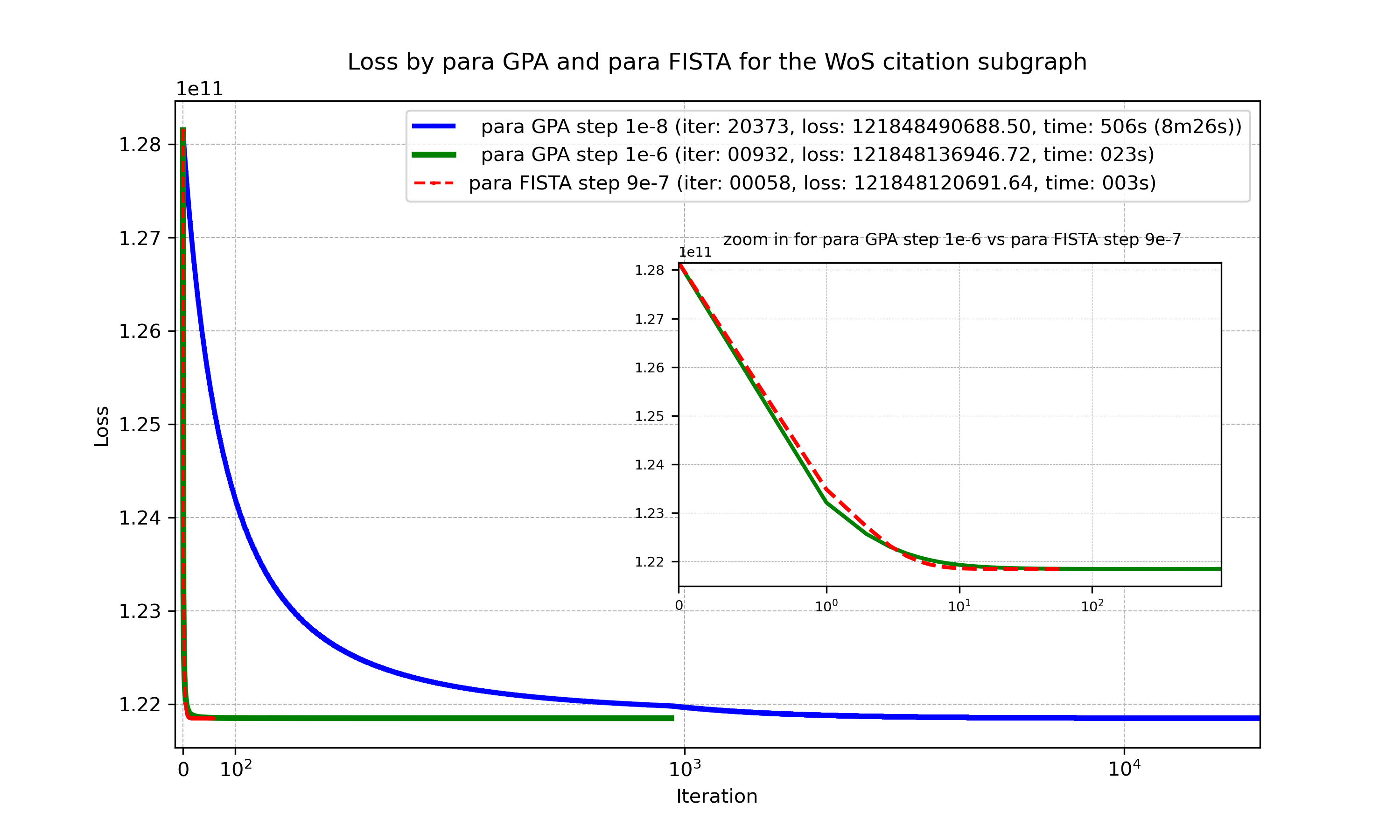}
  \caption{Loss over iterations by parallel GPA and FISTA for the WoS citation subgraph with different step sizes}
  \label{loss_gpa_fista_wos700k_pic}
\end{figure}
In this experiment, we evaluate the effectiveness of parallel GPA and FISTA in reducing the objective loss. 

\medskip
For the WoS medium-sized instance, we first test the theoretically viable step size $10^{-8}$. As expected from Theorem~\ref{gpa_conv_thm}, parallel GPA with this step size results in a monotonically decreasing loss over $20{,}000$ iterations, taking about $8.5$ minutes. However, empirical tuning reveals that the algorithm can tolerate much larger step sizes. For example, with a step size of $10^{-6}$, a factor of $100$ larger, parallel GPA converges to a comparable (and actually lower) loss in fewer than $1{,}000$ iterations and just $23$ seconds.

\medskip
This shows that while the theoretical step size ensures convergence, practical performance can be significantly improved by using a well-chosen, larger step size. Nevertheless, even with the tuned step size, GPA may still require many iterations with diminishing returns in loss reduction. This motivates us to consider parallel FISTA as a heuristic acceleration technique -- despite its lack of convergence guarantees in our non-convex setting. With a slightly smaller step size of $9\times 10^{-7}$, parallel FISTA reaches the best loss achieved by parallel GPA in only $58$ iterations and $3$ seconds -- demonstrating a substantial speedup. A detailed comparison of the performance of parallel GPA and FISTA is shown in Fig.~\ref{loss_gpa_fista_wos700k_pic}.

\medskip
\begin{figure}[h!]
  \centering
  \includegraphics[width=\linewidth]{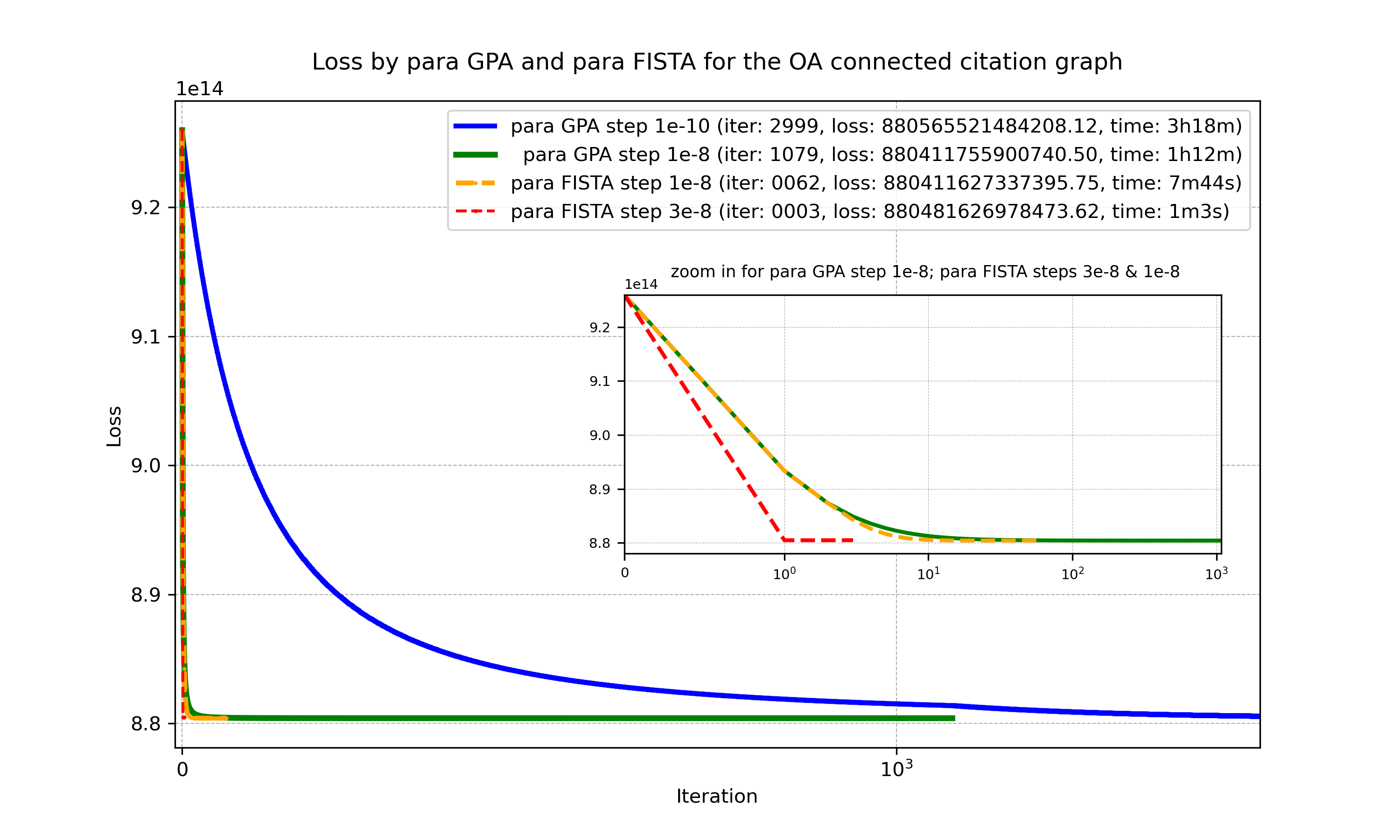}
  \caption{Loss over iterations by parallel GPA and FISTA for the OpenAlex connected citation graph with different step sizes}
  \label{loss_gpa_fista_oa_connected_pic}
\end{figure}
We now evaluate the performance of parallel GPA and FISTA on the large-scale OpenAlex connected citation graph. As with the WoS instance, we begin by applying a conservative step size of $10^{-10}$, which is well below the theoretical threshold established in Theorem~\ref{gpa_conv_thm}. As guaranteed by the theorem, this ensures that parallel GPA achieves a monotonically decreasing loss over $3{,}000$ iterations -- though at the cost of more than $3.3$ hours of runtime. In contrast, parallel FISTA with a larger step size of $3\times 10^{-8}$ achieves a significantly better loss value with just $3$ iterations and $1$ minute. 

\medskip
With the step size $10^{-8}$, both methods yield further reductions in loss within reasonable iterations and times: parallel GPA with $1{,}079$ iterations and more than an hour, and parallel FISTA with $62$ iterations and less than $8$ minutes. Notably, the loss initially achieved by parallel GPA with step size $10^{-10}$ can now be matched by parallel GPA and FISTA after only $30$ and $10$ iterations, respectively. A visual comparison of the loss trajectories across these configurations is provided in Fig.~\ref{loss_gpa_fista_oa_connected_pic}.

\medskip
The experiments on WoS and OpenAlex instances demonstrate that parallel GPA and parallel FISTA are effective in reducing the objective loss, especially when step sizes are appropriately tuned. Interestingly, parallel FISTA can serve as a powerful heuristic acceleration method, particularly on large-scale datasets -- even without theoretical guarantees in the non-convex setting.

\medskip
\noindent{\bf Experiment 2: Do the solutions look reasonable?} 

Assessing whether a clustering solution ``looks reasonable'' is challenging for large graphs, as they cannot be easily visualized. 
Example~\ref{ex_7node_graph} also demonstrated how GPA can yield a variety of outcomes depending on initialization, reflecting the non-convex nature of the objective and the sensitivity of the method to the starting point. To better assess the quality of the clustering solution, we focus here on the artificial graph, which has a comparable size to the mid-sized WoS graph but has two known ground-truth clusters (labeled 1 for Node IDs 1--500000 and labeled 2 for the others). 

With the same setting, step size $5\times 10^{-7}$ and $X^0$ with ones in the second row, both parallel GPA and parallel FISTA successfully recovered the expected structure in less than 10 iterations and about 15 seconds. This demonstrates the potential of the two algorithms when the underlying graph is well-formed.
\begin{figure}[h!]
  \centering
  \includegraphics[width=\linewidth]{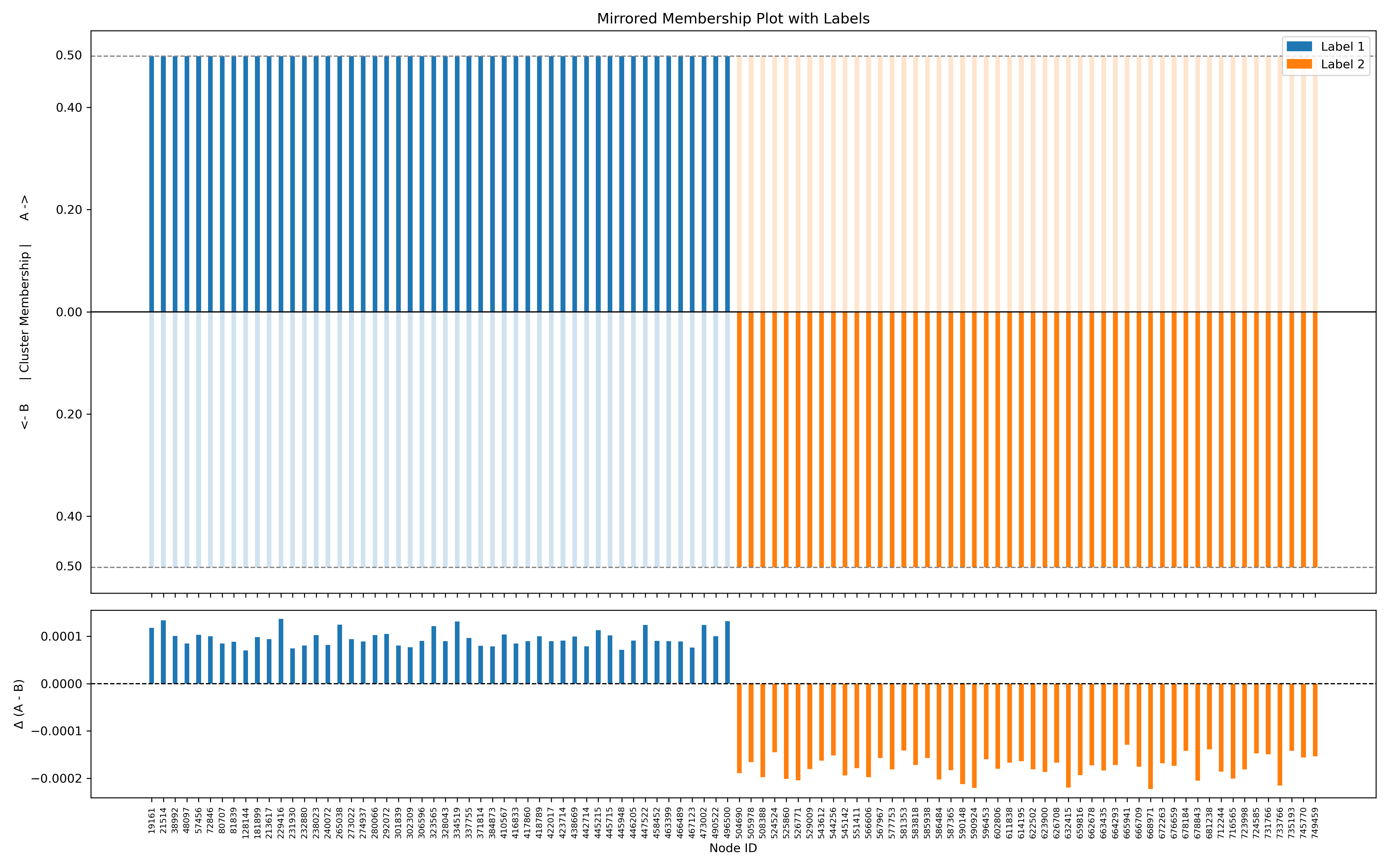}
  \caption{Clustering solution by parallel GPA on the artificial graph.}
  \label{membership_gpa_artifical700k_pic}
\end{figure}
Fig.~\ref{membership_gpa_artifical700k_pic} visualizes the clustering result (the output membership matrix $\bar X$) by parallel GPA for the artificial graph. Due to the graph’s size, we randomly selected 50 nodes from each of the two labeled groups for plotting. The top plot shows the membership values of each selected node with respect to Clusters A and B, where nondominant values are rendered in a fainter color. The bottom plot displays the differences $\Delta$ (A - B) in membership value between Cluster A and Cluster B. These plots indicate strong agreement between the dominant memberships and the ground-truth labels.

\medskip
While results on WoS and OpenAlex graphs are less conclusive -- likely due to the complex graph topology or sensitivity to initial points -- parallel GPA and FISTA remain promising for large-scale fuzzy clustering. 

\section{Conclusion and future work}
We introduced parallel GPA and parallel FISTA, two algorithms for fuzzy clustering tailored to large-scale scientific article datasets. Both methods achieve substantial reductions in the clustering objective, with parallel FISTA notably delivering strong heuristic acceleration on real citation graphs containing hundreds of thousands to millions of nodes. For parallel GPA, we prove convergence to critical points and establish second-order optimality conditions that offer novel theoretical insights into solution quality. On a synthetic graph with a well-defined cluster structure, two methods successfully recover the expected clusters, confirming their effectiveness under ideal conditions.

A key innovation of our approach lies in the parallelization strategy: By taking advantage of the mathematical structure of the problem, projections are calculated independently on each column, while gradient updates are based on a small shared matrix \(XX^T\) among columns, avoiding explicit computation of the large \(X^TX\) matrix. This small matrix is also reused to evaluate the loss function \(f(X) = \|S - X^TX\|_F^2\), significantly reducing computational cost. These algorithmic improvements are implemented with CUDA on GPUs, exploiting their massive parallelism and high memory bandwidth to scale efficiently to very large datasets.

Future work will focus on improving initialization strategies, adaptive step size tuning, and exploiting hard clustering solutions to enhance robustness and interpretability. We also plan to use weighted edges, incorporate structured sparsity, metadata, and text-based similarity derived from titles and abstracts using large language models, to better capture complex real-world graphs. Finally, we plan to extensively compare the results with existing clusterings.

\bigskip
\noindent{\large \bf Acknowledgements.} This work is co-funded by the European Union (European Regional Development Fund EFRE, Fund No.\! STIIV-001) and supported by the German Competence Network for Bibliometrics (Grant~No. 16WIK2101A).
The research for this article was conducted at the Research Campus MODAL, funded by the German Federal Ministry of Education and Research (BMBF) (Grant No. 05M14ZAM, 05M20ZBM, 05M2025).

\end{document}